\newtheorem{theorem}{Theorem} 
\newtheorem{lemma}{Lemma}
\newtheorem{defi}{Definition}
\newcommand{\satop}[2]{\stackrel{\scriptstyle{#1}}{\scriptstyle{#2}}}
\newcommand{\NN}{{\mathbb N}}
\newcommand{\ZZ}{{\mathbb Z}}
\newcommand{\RR}{{\mathbb R}}
\newcommand{\bsx}{\boldsymbol{x}}
\newcommand{\bsy}{\boldsymbol{y}}
\newcommand{\bsz}{\boldsymbol{z}}
\newcommand{\bsh}{\boldsymbol{h}}
\newcommand{\bsk}{\boldsymbol{k}}
\newcommand{\bsgamma}{\boldsymbol{\gamma}}
\newcommand{\cP}{\mathcal{P}}
\newcommand{\cQ}{\mathcal{Q}}
\newcommand{\cR}{\mathcal{R}}
\newcommand{\npoint}{\bsx_0,\ldots,\bsx_{N-1}}
\newcommand{\icomp}{\mathtt{i}}
\newcommand{\rd}{\,{\rm d}}
\newcommand{\uu}{\mathfrak{u}}
\newcommand{\bszero}{{\boldsymbol{0}}}
\newcommand{\gdisc}{D_{N,\bsgamma}^{\ast}}
\newcommand{\disc}{D_N^{\ast}}
\newenvironment{proof}{\begin{trivlist}
    \item[\hskip\labelsep{\it Proof.}]}{$\hfill\Box$\end{trivlist}}
\definecolor{darkred}{RGB}{139,0,0}
\definecolor{darkgreen}{RGB}{0,100,0}
\definecolor{darkmagenta}{RGB}{139,0,139}
\begin{document}
\title{The weighted star discrepancy of Korobov's $p$-sets}
\author{Josef Dick\thanks{J. D. is supported by a QEII Fellowship of the Australian Research Council.} and Friedrich Pillichshammer\thanks{F.P. is supported by the Austrian Science Fund (FWF): Project F5509-N26, which is a part of the Special Research Program ``Quasi-Monte Carlo Methods: Theory and Applications''.}}
\date{}
\maketitle

\begin{abstract}
We analyze the weighted star discrepancy of so-called $p$-sets which go back to definitions due to Korobov in the 1950s and Hua and Wang in the 1970s. Since then, these sets have largely been ignored since a number of other constructions have been discovered which achieve a better convergence rate. However, it has recently been discovered that the $p$-sets perform well in terms of the dependence on the dimension. 

We prove bounds on the weighted star discrepancy of the $p$-sets which hold for any choice of weights. For product weights we give conditions under which the discrepancy bounds are independent of the dimension $s$. This implies strong polynomial tractability for the weighted star discrepancy. We also show that a very weak condition on the product weights suffices to achieve polynomial tractability.
\end{abstract}

\centerline{\begin{minipage}[hc]{150mm}{
{\em Keywords:} Weighted star discrepancy, $p$-sets, (strong) tractability, quasi-Monte Carlo.
{\em MSC 2000:} 11K38, 65C05.}
\end{minipage}}

\section{Introduction}\label{sec1}

For an $N$-element point set $\cP=\{\npoint\}$ in the $s$-dimensional unit cube $[0,1)^s$ the {\it discrepancy function} $\Delta$ is defined by
$$\Delta(\alpha_1,\ldots
,\alpha_s):=\frac{A_N\left(\prod_{i=1}^s[0,\alpha_i)\right)}{N}-
\alpha_1\cdots \alpha_s$$ for $0 < \alpha_1,\ldots, \alpha_s \le 1$.
Here $A_N(E)$ denotes the number of indices $n \in \{0,1,\ldots,N-1\}$,
such that $\bsx_n$ belongs to the set $E$. By taking the sup
norm of this function, we obtain the {\it star discrepancy}
\begin{eqnarray*}
\disc(\cP) =\sup_{\bsz \in [0,1]^s} |\Delta(\bsz)|
\end{eqnarray*}
of the point set $\cP$. 

The motivation for the definition of the star discrepancy comes from quasi-Monte Carlo integration $\frac{1}{N} \sum_{n=0}^{N-1} f(\bsx_n) \approx \int_{[0,1]^s} f(\bsx) \,\mathrm{d} \bsx$ of functions over the $s$-dimensional unit cube $[0,1]^s$. There one requires point sets $\cP$ which are very well distributed. In many cases the quality of the distribution of a point set is measured by the star discrepancy $\disc(\cP)$, which is intimately linked to the worst-case error of quasi-Monte Carlo integration via the well known Koksma-Hlawka inequality $$\left| \int_{[0,1]^s} f(\bsx) \,\mathrm{d} \bsx - \frac{1}{N} \sum_{n=0}^{N-1} f(\bsx_n) \right| \le \disc(\cP) V(f),$$ where $V(f)$ is the variation of $f$ in the sense of Hardy and Krause. See, for example, \cite{DP10,dt,kuinie,leopil,niesiam} for more information. 

At the end of the 1990s Sloan and Wo\'zniakowski~\cite{SW98} (see also \cite{DSWW,NW10})
introduced the notion of weighted discrepancy and proved a ``weighted'' Koksma-Hlawka inequality. The idea is that in many applications some projections are more important than others and that this should also be reflected in the quality measure of the point set. 

We start with some notation which goes back to the paper \cite{SW98}: let $[s]=\{1,2,\ldots ,s\}$ denote the set of coordinate indices.
For $\uu \subseteq [s]$, $\uu \not= \emptyset$, let $\gamma_{\uu}$
be a nonnegative real number (the {\it weight corresponding to the group of variables given by $\uu$}), $|\uu|$ the cardinality
of $\uu$, and for a vector $\bsz \in [0,1]^s$ let $\bsz_{\uu}$ denote
the vector from $[0,1]^{|\uu|}$ containing the components of $\bsz$
whose indices are in $\uu$. By $(\bsz_{\uu},1)$ we mean the vector
$\bsz$ from $[0,1]^s$ with all components whose indices are not in
$\uu$ replaced by 1.

\begin{defi}\label{weighted_star_dis}\rm
For an $N$-element point set $\cP$ in $[0,1)^s$ and given weights
$\bsgamma=\{\gamma_{\uu} : \emptyset \not= \uu \subseteq [s]\}$,
the {\it weighted star discrepancy} $\gdisc$ is given by
\begin{eqnarray*}
\gdisc(\cP) =\sup_{\bsz \in [0,1]^s} \max_{\emptyset \not=\uu \subseteq [s]}
\gamma_{\uu} |\Delta(\bsz_{\uu},1)|.
\end{eqnarray*}
\end{defi}
If $\gamma_{\uu} = 1$ for all $\uu \subseteq [s]$ (or $\gamma_{[s]}=1$ and $\gamma_{\uu}=0$ for $\uu \subsetneq [s]$), then the weighted star discrepancy coincides with the classical star discrepancy. 

The most popular and studied weights in literature are so-called {\it product weights} which are weights of the form $\gamma_{\uu}=\prod_{j \in \uu}\gamma_{j}$, for $\emptyset \not=\uu \subseteq [s]$, where the $\gamma_{j}$'s are positive reals, the weights associated with the $j$th component. See, for example, \cite{DSWW, SW98}. We assume throughout the paper that the weights $\gamma_j$ are non-increasing, i.e., $\gamma_1 \ge \gamma_2 \ge \gamma_3 \ge \ldots$.

\subsection*{Tractability}

The dependence on the dimension is the subject of tractability studies \cite{NW08, NW10, NW12}. We introduce the necessary background in the following.

For $s, N \in \NN$ the {\it $N$th minimal weighted star discrepancy} is  $${\rm disc}_{\bsgamma}(N,s)=\inf_{\cP \subseteq [0,1)^s \atop \# \cP=N} D_{N,\bsgamma}^{\ast}(\cP).$$

We would like to have a point set in the $s$-dimensional unit cube with weighted star discrepancy of at most $\varepsilon \in (0,1)$ and we are looking for the smallest cardinality $N$ of a point set such that this can be achieved. For $\varepsilon \in (0,1)$ and dimension $s \in \NN$ we define the {\it information complexity} $$N_{\min}(\varepsilon,s):=\min\{N \in \NN\, : \, {\rm disc}_{\bsgamma}(N,s) \le \varepsilon\},$$  which is sometimes also called the {\it inverse of the weighted star discrepancy}.

\begin{defi}\rm
\begin{enumerate}
\item We say that the weighted star discrepancy is {\it polynomially tractable}, if there exist nonnegative real numbers $C,\alpha$ and $\beta$ such that 
\begin{equation}\label{pt}
N_{\min}(\varepsilon,s) \le C s^{\alpha} \varepsilon^{-\beta}
\end{equation}
holds for all dimensions $s\in \NN$ and for all $\varepsilon \in (0,1)$. The infima over all $\alpha,\beta>0$ such that \eqref{pt} holds are called the $s$-exponent and the $\varepsilon$-exponent, respectively, of polynomial tractability. 
\item We say that the weighted star discrepancy is {\it strongly polynomially tractable}, if there exist nonnegative real numbers $C$ and $\alpha$ such that 
\begin{equation}\label{spt}
N_{\min}(\varepsilon,s) \le C \varepsilon^{-\alpha}
\end{equation}
holds for all dimensions $s\in \NN$ and for all $\varepsilon \in (0,1)$. The infimum over all $\alpha>0$ such that \eqref{spt} holds is called the $\varepsilon$-exponent of strong polynomial tractability. 
\end{enumerate} 
\end{defi}

Polynomial tractability means that there exists a point set whose cardinality is polynomial in $s$ and $\varepsilon^{-1}$ such that the weighted star discrepancy of this point set is bounded by $\varepsilon$. Polynomial tractability and strong polynomial tractability for the classical star discrepancy are defined in the same manner as in the weighted case.

An excellent survey on tractability of different notions of discrepancy can be found in the paper \cite{novwoz} or in the books \cite{NW08,NW10,NW12} which perfectly summarize the current state of the art in tractability theory.

\subsection*{Results on the weighted star-discrepancy}

We provide an informal description of our results. The details are given in Section~\ref{sec2}. We study three kinds of point sets in the unit cube which go back to Korobov, and Hua and Wang, and which are sometimes called the ``$p$-sets''. The advantage of these point sets is that their constructions are very easy (see Section~\ref{sec_pset} for details).  As an example, one of these $p$-sets is given by the points $(\{n/p\},\{n^2/p\},\ldots,\{n^s/p\})$ for $n=0,1,\ldots,p-1$, where $p$ is a prime number, and where $\{x\} = x - \lfloor x \rfloor$ denotes the fractional part of $x$ for positive real numbers $x$.

For simplicity we restrict ourselves to product weights. Results for general weights are given in Section~\ref{sec2}. In this paper we show that if the product weights $\bsgamma = \{\prod_{j \in \uu} \gamma_j\}$ satisfy
\begin{equation*}
\sum_{j=1}^\infty \gamma_j < \infty,
\end{equation*}
then for any $0 < \delta < 1/2$ there exists a constant $c^{(1)}_{\bsgamma, \delta} > 0$ which depends only on $\bsgamma$ and $\delta$ but not on the number of points $N$ and the dimension $s$, such that Korobov's $p$-sets $\cP$ satisfy
\begin{equation*}
\gdisc(\cP) \le c^{(1)}_{\bsgamma, \delta} \frac{1}{N^{1/2-\delta}}.
\end{equation*}
This implies strong polynomial tractability. If there exists a real number $t > 0$ such that the product weights $\bsgamma = \{\prod_{j \in \uu} \gamma_j\}$ satisfy
\begin{equation*}
\sum_{j=1}^\infty \gamma_j^t < \infty,
\end{equation*}
then for any $0 < \delta < 1/2$ there exists a constant $c^{(2)}_{\bsgamma, \delta, t} > 0$ which depends only on $\bsgamma$,  $\delta$ and $t$, but not on the number of points $N$ and the dimension $s$, such that Korobov's $p$-sets $\cP$ satisfy
\begin{equation*}
\gdisc(\cP) \le  c^{(2)}_{\bsgamma, \delta, t} \frac{s}{N^{1/2-\delta}}.
\end{equation*}
In this case we have polynomial tractability.

\subsection*{Literature review}

To put our results into context, we provide a review of known results. From Heinrich, Novak, Wasilkowski, and Wo\'{z}niakowski~\cite{hnww} it is known that for any number of points $N$ and dimension $s$ there exists a point set $\cP_{N,s}\subseteq [0,1)^s$, such that 
\begin{equation}\label{HNWW}
D_N^{\ast}(\cP_{N,s}) \le C \sqrt{\frac{s}{N}} 
\end{equation}
for some constant $C>0$.  Hence the classical star discrepancy is tractable with $s$-exponent at most one and $\varepsilon$-exponent at most two. It was further shown in \cite{hnww} that the inverse of the classical star discrepancy is at least $c s \log \varepsilon^{-1}$ with an absolute constant $c>0$ for all $\varepsilon \in (0,\varepsilon_0]$ and $s \in \NN$. This lower bound was improved by Hinrichs \cite{hin} to $c s \varepsilon^{-1}$ with an absolute constant $c>0$ for all $\varepsilon \in (0,\varepsilon_0]$ and $s \in \NN$. From these results it follows, that the classical star 
discrepancy cannot be strongly polynomially tractable. We stress that all mentioned results are non-constructive. A first constructive approach is given in \cite{dgs}. However here for given $s$ and $\varepsilon$ the authors can only ensure a running time for the construction algorithm of order $C^s s^s (\log s)^s \varepsilon^{-2(s+2)}$ which is too expensive for practical applications. 

The bound \eqref{HNWW} can be interpreted as having product weights $(\gamma_j)$ for which $\gamma_j = 1$ for all $j$. In comparison, to achieve polynomial tractability in our result, we require that for product weights we have $\sum_{j = 1}^\infty \gamma_j^t < \infty$ for some arbitrarily large real number $t > 0$. Note that \cite{hnww} only show an existence result, whereas our result is completely constructive.

We recall some known results for the weighted star discrepancy. The first result was shown in \cite{HPS2008} (see also \cite{DP10,NW10} for a summary).
\begin{theorem}[Hinrichs, Pillichshammer, Schmid]\label{thHPS}
There exists a constant $C>0$ with the following property: for given number of points $N$ and dimension $s$ there exists an $N$-element point set $\cP$ in $[0,1)^s$ such that
\begin{equation}\label{bdx1}
\gdisc(\cP) \le C\,\frac{1+\sqrt{\log s}}{\sqrt{N}}  \max_{\emptyset \not=\uu
\subseteq [s]} \gamma_{\uu} \sqrt{|\uu|}.
\end{equation}
\end{theorem}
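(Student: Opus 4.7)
My plan is to use the probabilistic method. Choose $\bsx_0,\ldots,\bsx_{N-1}$ independent and uniformly distributed in $[0,1)^s$ and set $\cP=\{\bsx_0,\ldots,\bsx_{N-1}\}$. For each non-empty $\uu\subseteq[s]$ of size $k:=|\uu|$, the projection $\cP_\uu$ consists of $N$ i.i.d.\ uniform points in $[0,1)^k$, and unravelling Definition~\ref{weighted_star_dis} gives
$$\gdisc(\cP)\;=\;\max_{\emptyset\neq\uu\subseteq[s]}\gamma_\uu\,\disc(\cP_\uu).$$
It therefore suffices to control $\disc(\cP_\uu)$ simultaneously for every $\uu$, then weight by $\gamma_\uu$ and take the maximum.

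The key input is a probability-tail companion of the HNWW bound \eqref{HNWW}: there exist absolute constants $c_1,c_2>0$ such that, for $N$ i.i.d.\ uniform points in $[0,1)^k$,
$$\PP\bigl(\sqrt{N}\,\disc(\cP_\uu)\ge\lambda\bigr)\;\le\;\exp(-c_1\lambda^2)\qquad\text{whenever }\lambda\ge c_2\sqrt{k}.$$
Such a tail estimate can be produced, for example, by combining the bracketing construction underlying \cite{hnww} with Hoeffding's inequality, together with a chaining step to keep the exponent linear in $k$ (rather than $k\log N$); equivalently, it can be derived from a Talagrand-type concentration inequality for the supremum of the empirical process indexed by anchored boxes in $[0,1]^k$, whose indexing class has VC dimension of order $k$.

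Equipped with this tail bound, I set $\lambda_\uu:=C_0\sqrt{k+\log\binom{s}{k}+\log s}$ with $C_0$ large enough that
$$\sum_{k=1}^{s}\binom{s}{k}\exp(-c_1\lambda_\uu^2)\;<\;1.$$
A union bound then yields, with positive probability, a single realisation of $\cP$ for which $\sqrt{N}\,\disc(\cP_\uu)\le\lambda_\uu$ holds for every non-empty $\uu$ simultaneously. Using $\log\binom{s}{k}\le k\log(\mathrm{e}s/k)\le k(1+\log s)$ one obtains $\lambda_\uu\le C'\sqrt{k}\bigl(1+\sqrt{\log s}\bigr)$, and multiplying by $\gamma_\uu$ and maximising over $\uu$ produces the bound \eqref{bdx1}. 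The main obstacle is the concentration inequality itself, and in particular ensuring its exponent scales linearly in $k$ rather than in $s$---otherwise the factor $\binom{s}{k}$ in the union bound would destroy the $\sqrt{|\uu|}$ dependence on the right-hand side.
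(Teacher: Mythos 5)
The paper does not prove this theorem; it is quoted as a known result with a pointer to \cite{HPS2008}, so there is no internal proof to compare against. Your probabilistic argument is essentially the proof given in that reference: random points, a sub-Gaussian tail for $\sqrt{N}\,\disc(\cP_\uu)$ beyond the threshold $c_2\sqrt{|\uu|}$, and a union bound over all $2^s-1$ projections, with $\log\binom{s}{k}\le k(1+\log s)$ producing exactly the $\sqrt{|\uu|}\,(1+\sqrt{\log s})$ shape of \eqref{bdx1}. You correctly identify that the entire difficulty sits in the concentration input; the cleanest route to it is the one you mention second, namely $\EE\bigl[\sqrt{N}\,\disc(\cP_\uu)\bigr]\le C\sqrt{|\uu|}$ from the VC/Talagrand--Haussler argument of \cite{hnww}, combined with the bounded-differences inequality (changing one sample point moves $\sqrt{N}\,\disc$ by at most $N^{-1/2}$), which gives a dimension-free sub-Gaussian deviation around that mean. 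The bracketing-plus-Hoeffding alternative you mention first would, without the chaining refinement, leak an extra logarithmic factor, so it should not be relied on as stated; with the Talagrand-type input the proof is complete and correct.
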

Note that the point set $\cP$ from Theorem~\ref{thHPS} is independent of the choice of weights. The result is a pure existence result. Under very mild conditions on the weights Theorem~\ref{thHPS} implies polynomial tractability with $s$-exponent zero. See \cite{HPS2008} for details.

For product weights we have the following result which is taken from \cite{DLP2006} (see also \cite[Corollary~8]{dnp06} and \cite[Corollary~10.30]{DP10}):

\begin{theorem}[Dick, Leobacher, Pillichshammer]\label{thHPS2}
For every prime number $p$, every $m \in \NN$ and for given product weights $(\gamma_j)_{j \ge 1}$ with $\sum_j \gamma_j < \infty$ one can construct (component-by-component) a $p^m$-element point set $\cP$ in $[0,1)^s$ such that for every $\delta >0$ there exists a quantity $C_{\bsgamma,\delta}>0$ with the property $$D_{p^m,\bsgamma}^{\ast}(\cP)\le \frac{C_{\bsgamma,\delta}}{p^{m(1-\delta)}}.$$
\end{theorem}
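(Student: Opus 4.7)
The plan is to combine a component-by-component (CBC) construction with a Jensen-type inequality applied to a parametric figure of merit that dominates the weighted star discrepancy.

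First I would apply an Erd\H{o}s--Tur\'an--Koksma type inequality (adapted to product weights) to obtain a bound
\begin{align*}
\gdisc(\cP) \le C \sum_{\emptyset \ne \uu \subseteq [s]} \gamma_{\uu} R_{\uu}(\cP),
\end{align*}
where $R_{\uu}(\cP)$ is a weighted sum over non-zero frequency vectors $\bsh$ in the dual group of the $\uu$-projection of $\cP$. For a (polynomial) lattice rule generated by a vector $\bsz$, only frequencies in the dual lattice contribute, reducing the problem to estimating an explicit discrete sum in $\bsz$. To obtain the near-optimal rate $p^{-m(1-\delta)}$ instead of only $p^{-m/2}$, one introduces a parametric figure of merit $B_\lambda(\bsz)$ for $\lambda \in (1/2, 1]$, arranged so that a Jensen/H\"older inequality yields $\sum_{\uu} \gamma_{\uu} R_{\uu}(\cP) \le (B_\lambda(\bsz))^{\lambda}$.

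The CBC algorithm then picks $z_1, z_2, \ldots, z_s$ sequentially: at step $j$, one chooses $z_j$ from the admissible finite set so as to minimise the partial figure of merit $B_\lambda^{(j)}$. The core averaging inequality
\begin{align*}
\min_{z_j} B_\lambda^{(j)}(z_1,\ldots,z_j) \le \frac{1}{|\mathcal{Z}|} \sum_{z_j \in \mathcal{Z}} B_\lambda^{(j)}(z_1,\ldots,z_j)
\end{align*}
simplifies via character orthogonality on the underlying cyclic group of order $p^m$, producing a multiplicative recursion of the form $B_\lambda^{(j)} \le B_\lambda^{(j-1)}\bigl(1 + c_\lambda \gamma_j^{1/\lambda}/p^m\bigr) + c_\lambda \gamma_j^{1/\lambda}/p^m$. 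Telescoping gives $B_\lambda^{(s)} \le (c_\lambda/p^m) \prod_{j=1}^s (1 + c_\lambda \gamma_j^{1/\lambda}/p^m)$; the hypothesis $\sum_j \gamma_j < \infty$ together with $\gamma_j \to 0$ implies $\sum_j \gamma_j^{1/\lambda} < \infty$ for every $\lambda \in (0,1]$, so the product is uniformly bounded in $s$. Applying the Jensen step and choosing $\lambda = 1-\delta$ yields $\gdisc(\cP) \le C_{\bsgamma,\delta}/p^{m(1-\delta)}$, as claimed.

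The main technical obstacle is designing $B_\lambda$ so that the averaging identity produces the crucial factor $1/p^m$ (rather than $1/p^{m/2}$) in the recursion, and tracking how the implicit constant $c_\lambda$ degenerates as $\lambda \to 1/2$. Making $c_\lambda$ explicit and keeping all constants independent of $s$ is precisely where the summability hypothesis $\sum_j \gamma_j < \infty$ is essential; once the recursion is correctly set up, the telescoping and the final Jensen step are routine.
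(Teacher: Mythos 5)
This statement is quoted in the paper from the literature (Dick--Leobacher--Pillichshammer, \cite{DLP2006}); the paper itself gives no proof, so your proposal can only be compared with the argument in the cited source. Your outline does follow that argument in its essentials: bound the weighted star discrepancy via an Erd\H{o}s--Tur\'an--Koksma-type inequality by a dual-lattice quantity $\sum_{\uu}\gamma_{\uu}\sum_{\bsh}1/r(\bsh)$, run a component-by-component search minimising this (or a $\lambda$-modified) figure of merit, use the standard ``minimum is at most the average'' step with character orthogonality, and conclude with Jensen's inequality at $\lambda=1-\delta$, using that $\sum_j\gamma_j<\infty$ forces $\sum_j\gamma_j^{1/\lambda}<\infty$ and hence an $s$-independent product. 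That is the right skeleton and it does deliver the claimed rate.

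Two points are off, though neither is fatal. First, your stated motivation for the Jensen step is misdiagnosed: for this figure of merit the plain averaging already produces a factor $1/p^m$ (not $1/p^{m/2}$), because character orthogonality counts dual vectors, giving roughly $(1/p^m)\prod_j\bigl(1+\gamma_j\,c\,m\log p\bigr)$. The genuine role of Jensen here is to replace the divergent one-dimensional sum $\sum_{0<|h|<p^m}|h|^{-1}\approx 2m\log p$ by the convergent $\sum_h |h|^{-1/\lambda}\le 2\zeta(1/\lambda)$, i.e.\ to trade the $(\log p^m)^{s}$-type factors for a dimension-independent constant $c_\lambda$ at the cost of $\delta$ in the exponent. (An alternative, which is how the present paper handles its own Theorem~\ref{thm2}, is to keep the logarithms and absorb them into $p^{m\delta}$ using summability of the weights.) Consequently the ``main technical obstacle'' you flag --- extracting $1/p^m$ rather than $1/p^{m/2}$ from the averaging --- is not actually an obstacle. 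Second, the proposal remains a plan: the averaging identity and the resulting multiplicative recursion, which are the substantive content of the CBC error analysis, are asserted rather than derived, so as written this is a correct roadmap to the theorem rather than a proof of it.
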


Note that the point set $\cP$ from Theorem~\ref{thHPS2} depends on the choice of weights. The result implies that the weighted star discrepancy is strongly polynomially tractable with $\varepsilon$-exponent equal to one, as long as the weights $\gamma_j$ are summable. See \cite{DLP2006,dnp06,DP10,HPS2008} for more details.\\

The next result is about Niederreiter sequences in prime-power base $q$. For the definition of Niederreiter sequences we refer to \cite{DP10,niesiam}. The following result is \cite[Lemma~1]{wang2}:

\begin{lemma}[Wang]
For $N \in \NN$ let $\cP$ be the first $N$-elements of a Niederreiter sequence in prime-power base $q$. For $\uu \subseteq [s]$ we denote by $\cP_{\uu}$ the $|\uu|$-dimensional point set consisting of the projections of the elements of $\cP$ onto the coordinates which belong to $\uu$. Then we have $$D_N^{\ast}(\cP_{\uu}) \le \frac{1}{N} \prod_{j \in \uu}(C j \log(j+q) \log(qN)),$$ where $C>0$ is an absolute constant which is independent of $\uu$ and $s$.
\end{lemma}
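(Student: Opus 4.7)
The plan is to exploit the fact that any projection of a Niederreiter sequence is itself a digital $(t,s)$-sequence over $\FF_q$, estimate the $t$-value of the projection in terms of the degrees of the associated irreducible polynomials, and then invoke the classical star discrepancy bound for $(t,s)$-sequences.

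First, I would recall the construction: the $j$th coordinate of the Niederreiter sequence is built from the $j$th monic irreducible polynomial $p_j \in \FF_q[x]$, of some degree $e_j$. Choosing only the coordinates in $\uu$ gives a digital sequence whose generating matrices are precisely those associated with the polynomials $\{p_j : j \in \uu\}$. A standard argument (see Niederreiter's original analysis of these sequences) shows that this projected sequence $\cP_\uu$ is a digital $(t_\uu, |\uu|)$-sequence in base $q$ with quality parameter
\begin{equation*}
t_\uu \;\le\; \sum_{j \in \uu} (e_j - 1).
\end{equation*}
Second, I would apply the standard upper bound for the star discrepancy of the first $N$ points of a $(t, s')$-sequence in base $q$, which has the form
\begin{equation*}
D_N^{\ast}(\cP_\uu) \;\le\; \frac{c_{|\uu|}\, q^{t_\uu} (\log N)^{|\uu|}}{N} + O\!\left(\frac{q^{t_\uu}(\log N)^{|\uu|-1}}{N}\right),
\end{equation*}
where the implied constants can be written as a product of factors depending only on each index in $\uu$; this product form is exactly what one needs here, since we want the final bound to factorize over $j \in \uu$.

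Third, I would estimate $q^{e_j - 1}$ in terms of $j$ using the prime polynomial theorem over $\FF_q$: the number of monic irreducible polynomials of degree $\le e$ is asymptotic to $q^{e}/e$, so inverting this relation gives $q^{e_j} \le C'\, j\, e_j$ and $e_j \le C'' \log(j+q)/\log q$. This yields a clean bound of the form $q^{e_j - 1} \le C_0\, j \log(j+q)$ for an absolute constant $C_0$. Combined with $(\log N)^{|\uu|}$, absorbed into the product as one factor of $\log(qN)$ per coordinate, this produces the desired product $\prod_{j \in \uu}(C j \log(j+q) \log(qN))$ after adjusting the absolute constant $C$.

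The main obstacle is the bookkeeping in step two, namely showing that the leading constant $c_{|\uu|}$ and all the lower-order terms from the $(t,s)$-sequence discrepancy bound can be absorbed into a product over $j \in \uu$ with a single absolute constant $C$ per factor; this is where one relies on the factorized form of the classical Niederreiter--Tezuka bound and on the slack provided by the $\log(j+q)$ factor in the estimate for $q^{e_j - 1}$.
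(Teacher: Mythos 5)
The paper offers no proof of this lemma at all---it is quoted directly from \cite[Lemma~1]{wang2}---so the only comparison possible is with the standard argument, which is precisely the one you outline: pass to the projected digital $(t_{\uu},|\uu|)$-sequence with $t_{\uu}\le\sum_{j\in\uu}(e_j-1)$, apply the classical $(t,s)$-sequence discrepancy bound whose leading term factorizes coordinatewise, and control $q^{e_j-1}$ by counting monic irreducible polynomials over $\FF_q$. Two points need tightening. First, the counting step yields $q^{e_j-1}\le C'\,j\,e_j$ (since $j$ exceeds the number of monic irreducibles of degree at most $e_j-1$, which is at least $q^{e_j-1}/(2(e_j-1))$); your intermediate inequality $q^{e_j}\le C'\,j\,e_j$ is off by a factor of $q$ and fails for large $q$, although the consequence $q^{e_j-1}\le C_0\,j\log(j+q)$ that you actually use is correct. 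Second, the per-coordinate constant inherited from the Niederreiter net/sequence bound carries a factor of order $\lfloor q/2\rfloor/\log q$, and the slack $\log(j+q)\log(qN)\ge(\log q)^2$ is not enough to absorb it uniformly in the base; the resulting constant is uniform in $j$, $\uu$, $s$ and $N$, which is all the lemma asserts, but you should not expect the bookkeeping in your second step to produce a $C$ that is also independent of $q$.
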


Similar results can be shown for Sobol' sequences and for the Halton sequence (see \cite{wang1,wang2}). From this result one obtains:

\begin{theorem}\label{thm_afterWang}
For the weighted star discrepancy of the first $N$ elements $\cP$ of an $s$-dimensional Niederreiter sequence in prime-power base $q$ we have $$D_{N,\bsgamma}^{\ast}(\cP) \le \frac{1}{N} \max_{\emptyset \not= \uu \subseteq [s]} \gamma_{\uu} \prod_{j \in \uu}  (C j \log(j+q) \log(qN)).$$
\end{theorem}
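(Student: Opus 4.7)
The plan is to reduce the weighted star discrepancy to a maximum, over nonempty $\uu \subseteq [s]$, of (unweighted) star discrepancies of the coordinate projections $\cP_{\uu}$, and then to invoke Wang's Lemma directly to bound each projection.

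First I would unpack the definition of $\gdisc(\cP)$. Since the supremum over $\bsz \in [0,1]^s$ and the maximum over $\uu$ are both over independent quantities (the maximum is a finite max that can be pulled outside the sup), one gets
\begin{equation*}
\gdisc(\cP) \;=\; \max_{\emptyset \neq \uu \subseteq [s]} \gamma_{\uu} \sup_{\bsz \in [0,1]^s} |\Delta(\bsz_{\uu},1)|.
\end{equation*}
The next step is the key identity: for any $\bsz \in [0,1]^s$ and $\uu \subseteq [s]$, the value $\Delta(\bsz_{\uu},1)$ depends only on the coordinates of the points of $\cP$ in positions indexed by $\uu$. Indeed, substituting $\alpha_i=1$ for $i \notin \uu$ into the definition of $\Delta$, the box $\prod_{i=1}^s [0,\alpha_i)$ becomes $\prod_{i \in \uu}[0,z_i) \times [0,1)^{[s]\setminus \uu}$, and a point $\bsx_n$ lies in this box if and only if its $\uu$-projection lies in $\prod_{i \in \uu}[0,z_i)$. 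Writing $\Delta_{\uu}$ for the discrepancy function of $\cP_{\uu}$ in $[0,1)^{|\uu|}$, this yields $\Delta(\bsz_{\uu},1)=\Delta_{\uu}(\bsz_{\uu})$, and therefore
\begin{equation*}
\sup_{\bsz \in [0,1]^s}|\Delta(\bsz_{\uu},1)| \;=\; \sup_{\bsz_{\uu} \in [0,1]^{|\uu|}} |\Delta_{\uu}(\bsz_{\uu})| \;=\; D_N^{\ast}(\cP_{\uu}).
\end{equation*}

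Combining these two observations gives $\gdisc(\cP) = \max_{\emptyset \neq \uu \subseteq [s]} \gamma_{\uu}\, D_N^{\ast}(\cP_{\uu})$, and the stated bound then follows immediately from Wang's Lemma applied to each $\cP_{\uu}$. There is no real obstacle here: the argument is essentially a bookkeeping exercise exploiting the ``anchored'' nature of the star discrepancy (the fact that one corner of the test box is fixed at the origin), so that setting the extra coordinates to $1$ amounts to projecting rather than restricting the point set.
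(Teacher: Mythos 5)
Your proposal is correct and is essentially the paper's own argument: the paper treats the theorem as an immediate consequence of Wang's Lemma via the reduction $\gdisc(\cP) \le \max_{\emptyset \neq \uu \subseteq [s]} \gamma_{\uu}\, D_N^{\ast}(\cP_{\uu})$, which is exactly the projection identity you establish (and which also appears in the proof of Lemma~\ref{le2}).
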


In the case of product weights one can easily deduce from Theorem~\ref{thm_afterWang} that the weighted star discrepancy of the Niederreiter sequence can be bounded independently of the dimension whenever the weights satisfy $\sum_j \gamma_j j \log j < \infty$. This implies strong polynomial tractability with $\varepsilon$-exponent equal to one. (The same result can be shown for Sobol' sequences and for the Halton sequence).\\

A comparison of the results presented in this section with the new results will be given at the end of Section~\ref{sec2}.

\section{Korobov's $p$-sets}\label{sec_pset}

Let $p$ be a prime number. We consider the following point sets in $[0,1)^s$: 
\begin{itemize}
\item Let $\cP_{p,s}=\{\bsx_0,\ldots,\bsx_{p-1}\}$ with $$\bsx_n=\left(\left\{\frac{n}{p}\right\},\left\{\frac{n^2}{p}\right\},\ldots,\left\{\frac{n^s}{p}\right\}\right)\ \ \ \mbox{ for }\ n=0,1,\ldots,p-1.$$ The point set $\cP_{p,s}$ was introduced by Korobov \cite{kor1963} (see also \cite[Section 4.3]{huawang}).
\item Let $\cQ_{p^2,s}=\{\bsx_0,\ldots,\bsx_{p^2-1}\}$ with $$\bsx_n=\left(\left\{\frac{n}{p^2}\right\},\left\{\frac{n^2}{p^2}\right\},\ldots,\left\{\frac{n^s}{p^2}\right\}\right)\ \ \ \mbox{ for }\ n=0,1,\ldots,p^2-1.$$ The point set $\cQ_{p,s}$ was introduced by Korobov \cite{kor1957} (see also \cite[Section 4.3]{huawang}).
\item Let $\cR_{p^2,s}=\{\bsx_{a,k}\ : \ a,k \in \{0,\ldots,p-1\}\}$ with $$\bsx_{a,k}=\left(\left\{\frac{k}{p}\right\},\left\{\frac{a k}{p}\right\},\ldots,\left\{\frac{a^{s-1} k}{p}\right\}\right)\ \ \ \mbox{ for }\ a,k=0,1,\ldots,p-1.$$ Note that $\cR_{p^2,s}$ is the multi-set union of all Korobov lattice point sets with modulus $p$. The point set $\cR_{p^2,s}$ was introduced by Hua and Wang (see \cite[Section 4.3]{huawang}).
\end{itemize}

Hua and Wang \cite{huawang} called the point sets  $\cP_{p,s}$, $\cQ_{p^2,s}$ and $\cR_{p^2,s}$ the {\it $p$-sets}.

\section{The weighted star discrepancy of the $p$-sets}\label{sec2}

The classical (i.e. unweighted) star discrepancy of the $p$-sets is studied in \cite[Theorem 4.7-4.9]{huawang}. Here we consider the weighted star discrepancy.

\begin{theorem}\label{thm1}
Let $p$ be a prime number. For arbitrary weights $\bsgamma=\{\gamma_{\uu} : \emptyset \not= \uu \subseteq [s]\}$ we have: 
\begin{align*}
D_{p,\bsgamma}^{\ast}(\cP_{p,s}) \le &  \frac{2}{\sqrt{p}} \max_{\emptyset \not= \uu \subseteq [s]} \gamma_{\uu} (\max \uu) \left(4 \log p\right)^{|\uu|},\\
D_{p^2,\bsgamma}^{\ast}(\cQ_{p^2,s}) \le & \frac{3}{p} \max_{\emptyset \not= \uu \subseteq [s]} \gamma_{\uu} (\max \uu) \left(6 \log p\right)^{|\uu|}, \ \ \mbox{ and }\\
D_{p^2,\bsgamma}^{\ast}(\cR_{p^2,s}) \le &\frac{2}{p} \max_{\emptyset \not= \uu \subseteq [s]} \gamma_{\uu} (\max \uu) \left(4 \log p\right)^{|\uu|}.
\end{align*}
\end{theorem}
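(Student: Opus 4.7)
The plan is to reduce the weighted star discrepancy to the classical star discrepancies of the projections and then bound each by the Erdős-Turán-Koksma inequality together with classical exponential sum estimates. First, since the inner maximum in Definition~\ref{weighted_star_dis} is over a finite set, I may swap it with the supremum over $\bsz$ to obtain
\begin{equation*}
\gdisc(\cP) \;=\; \max_{\emptyset \neq \uu \subseteq [s]} \gamma_\uu \, D_N^\ast(\cP_\uu),
\end{equation*}
where $\cP_\uu$ denotes the $|\uu|$-dimensional projection of $\cP$ onto the coordinates in $\uu$. Crucially, these projections retain the polynomial structure of the parent set; for $\uu = \{i_1<\cdots<i_t\}$ the projection of $\cP_{p,s}$ equals $\{(\{n^{i_1}/p\},\ldots,\{n^{i_t}/p\}) : n=0,\ldots,p-1\}$, with analogous formulas for $\cQ_{p^2,s}$ and $\cR_{p^2,s}$.

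For each fixed $\uu$ I would apply the Erdős-Turán-Koksma inequality in dimension $|\uu|$, bounding $D_N^\ast(\cP_\uu)$ by a term of order $1/H$ plus
\begin{equation*}
\sum_{\bszero \neq \bsh \in \ZZ^{|\uu|},\ \|\bsh\|_\infty \le H} \frac{1}{r(\bsh)} \left|\frac{1}{N}\sum_{\bsy \in \cP_\uu} \ee^{2\pi \icomp \bsh \cdot \bsy}\right|,
\end{equation*}
for a positive integer $H$ to be optimized, with $r(\bsh) = \prod_{j} \max(1,|h_j|)$. The geometric factor is controlled by the standard estimate $\sum_{\bszero\ne\bsh,\,\|\bsh\|_\infty\le H} 1/r(\bsh) \le (1 + 2(1+\log H))^{|\uu|} - 1$, which will produce the $(c \log p)^{|\uu|}$ term in the final bound.

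The heart of the argument lies in estimating the three families of exponential sums. For $\cP_{p,s}$ the sum equals $\sum_{n=0}^{p-1} \ee^{2\pi \icomp P(n)/p}$ with $P(X) = \sum_j h_{i_j} X^{i_j}$; restricting the cutoff to $H<p$ forces $P$ to be non-constant modulo $p$, whence Weil's bound yields $(\max\uu - 1)\sqrt{p}$. For $\cR_{p^2,s}$ the double sum $\sum_{a,k} \ee^{2\pi \icomp k L_{\bsh}(a)/p}$ with $L_{\bsh}(a) = \sum_j h_j a^{i_j-1}$ factors, because the inner sum over $k$ equals $p$ when $L_{\bsh}(a)\equiv 0 \pmod p$ and vanishes otherwise, so the full double sum is bounded by $p$ times the number of zeros of $L_{\bsh}$, at most $p(\max\uu-1)$. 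For $\cQ_{p^2,s}$ I would parametrize $n = a + bp$ with $a,b\in\{0,\ldots,p-1\}$ and use the binomial expansion $P(a+bp) \equiv P(a) + bp\,P'(a) \pmod{p^2}$; the inner sum over $b$ then yields $p$ exactly when $P'(a)\equiv 0 \pmod p$, giving $p \cdot \#\{a : P'(a)\equiv 0 \pmod p\} \le p(\max\uu-1)$.

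Combining these character sum bounds with the geometric factor and choosing $H$ of order $\sqrt{p}-1$ for $\cP_{p,s}$, respectively of order $p-1$ for $\cQ_{p^2,s}$ and $\cR_{p^2,s}$, balances the $1/H$ remainder against the exponential sum contribution and delivers the stated rates. I anticipate the main obstacle to be twofold: first, tracking the universal constants carefully enough through the Erdős-Turán-Koksma inequality, the geometric series estimate, and the character sum bound to reach the specific factors $2,3,4,6$ displayed in the theorem; second, in the estimates for $\cQ_{p^2,s}$ and $\cR_{p^2,s}$, rigorously handling the edge cases in which $P'$ or $L_{\bsh}$ vanishes identically modulo $p$ (which can occur when every index $i_j$ with $h_{i_j}\not\equiv 0\pmod p$ is itself divisible by $p$, or more generally when the polynomial is divisible by $X^p-X$), either by a higher-order Taylor expansion or by a direct reduction argument.
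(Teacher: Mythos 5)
Your outline is essentially the paper's proof: reduce the weighted discrepancy to $\max_{\uu}\gamma_{\uu}D_N^{\ast}(\cP_{\uu})$, apply an Erd\H{o}s--Tur\'an--Koksma-type inequality to each projection, and feed in exactly the three exponential-sum bounds you describe (Weil for $\cP_{p,s}$, the factorization of the double sum for $\cR_{p^2,s}$, and the Hua--Wang bound for $\cQ_{p^2,s}$). Two points of difference are worth flagging. First, the paper does not use the general ETK inequality with a free cutoff $H$; it uses Niederreiter's Theorem~3.10 for rational point sets $\bsx_n=\{\bsy_n/M\}$, which sums over all of $C_{|\uu|}^{\ast}(M)$ with remainder $|\uu|/M$ and carries no $(3/2)^{|\uu|}$-type prefactor. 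With the general ETK inequality that prefactor inflates your per-coordinate constant and you will not land on the stated $(4\log p)^{|\uu|}$ and $(6\log p)^{|\uu|}$; so if you want the theorem as stated you should use the rational-point form. Second, because the paper sums over all of $C_{|\uu|}^{\ast}(p^2)$ for $\cQ_{p^2,s}$, it must split into $p\mid\bsh$ (where it divides out $p$ and falls back on the mod-$p$ Weil bound) and $p\nmid\bsh$ (where it simply cites Hua--Wang's Lemma~4.6 for the bound $(s-1)p$); your truncation at $H<p$ elegantly avoids that case split, but your from-scratch derivation of the $\cQ$ bound via $P(a+bp)\equiv P(a)+bpP'(a)\pmod{p^2}$ genuinely breaks down in the degenerate case $P'\equiv 0\pmod p$ (e.g.\ $P(X)=h_pX^p$ with $s\ge p$), where the zero count of $P'$ is $p$ and your bound degrades to the trivial $p^2$. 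You correctly identify this as the obstacle; the paper does not resolve it internally either, but discharges it by citation, and you would need to do the same or supply the missing argument.
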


The proof of Theorem~\ref{thm1} will be given in Section~\ref{prThm1}.

Note that the point sets $\cP_{p,s}, \cQ_{p^2,s}$ and $\cR_{p^2,s}$ from Theorem~\ref{thm1} are independent of the choice of weights.\\

Now we consider product weights and study tractability properties. Let $\gamma_{\uu}=\prod_{j \in \uu} \gamma_j$ where $\gamma_j >0$ for $j\in \NN$ and $\gamma_1 \ge \gamma_2 \ge \gamma_3 \ldots$. 

\begin{theorem}\label{thm2}
Assume that the weights $\gamma_j$ are non-increasing.
\begin{enumerate}
\item If $$\sum_{j=1}^{\infty} \gamma_j < \infty,$$ then for all $\delta>0$ there exist quantities $c_{\bsgamma,\delta}',c_{\bsgamma,\delta}'',c_{\bsgamma,\delta}'''>0$, which are independent of $p$ and $s$, such that
\begin{align*} 
D_{p,\bsgamma}^{\ast}(\cP_{p,s}) & \le \frac{c_{\bsgamma,\delta}'}{p^{1/2 -\delta}},\\
D_{p^2,\bsgamma}^{\ast}(\cQ_{p^2,s}) & \le \frac{c_{\bsgamma,\delta}''}{p^{1 -\delta}}, \ \ \mbox{ and }\\
D_{p^2,\bsgamma}^{\ast}(\cR_{p^2,s}) & \le \frac{c_{\bsgamma,\delta}'''}{p^{1 -\delta}}.
\end{align*}

\item If there exists a real number $t > 0$ such that $$\sum_{j=1}^{\infty} \gamma_j^t < \infty,$$ then for all $\delta>0$ there exist quantities $c_{\bsgamma,\delta, t}',c_{\bsgamma,\delta, t}'',c_{\bsgamma,\delta, t}'''>0$, which are independent of $p$ and $s$, such that
\begin{align*} 
D_{p,\bsgamma}^{\ast}(\cP_{p,s}) & \le \frac{c_{\bsgamma,\delta, t}' \ s}{p^{1/2 -\delta}},\\
D_{p^2,\bsgamma}^{\ast}(\cQ_{p^2,s}) & \le \frac{c_{\bsgamma,\delta, t}'' \ s}{p^{1 -\delta}}, \ \ \mbox{ and }\\
D_{p^2,\bsgamma}^{\ast}(\cR_{p^2,s}) & \le \frac{c_{\bsgamma,\delta, t}''' \ s}{p^{1 -\delta}}.
\end{align*}
\end{enumerate}
\end{theorem}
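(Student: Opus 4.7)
The plan is to apply Theorem~\ref{thm1} and then estimate the maximum on its right-hand side carefully under each of the two hypotheses. Writing $\beta_j := 4\gamma_j \log p$ (and $6\gamma_j\log p$ for the case of $\cQ_{p^2,s}$), product weights give $\gamma_{\uu}(4\log p)^{|\uu|} = \prod_{j\in\uu}\beta_j$, so both parts reduce to bounding
$$M := \max_{\emptyset \ne \uu \subseteq [s]} (\max\uu)\prod_{j\in \uu}\beta_j.$$
Since $(\beta_j)$ is non-increasing, for any $\uu$ with $\max \uu = k$ the product $\prod_{j\in\uu}\beta_j$ is maximised by $\uu = \{k\} \cup \{j < k : \beta_j \ge 1\}$ (including indices with $\beta_j < 1$ only decreases the product). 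Hence
$$M \le \Bigl(\max_{k\ge 1} k\beta_k\Bigr) \, \prod_{j:\beta_j \ge 1}\beta_j.$$

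For Part~1, summability and monotonicity yield $k\gamma_k \le \sum_{j\le k}\gamma_j \le \Gamma$ with $\Gamma := \sum_j\gamma_j$, so $\max_k k\beta_k \le 4\Gamma\log p$, and they also imply $j\gamma_j \to 0$. Hence for every $\eta > 0$ there exists $J=J(\eta)$ with $j\gamma_j \le \eta$ whenever $j > J$, and I split $\sum_{\beta_j\ge 1}\log\beta_j$ at $J$. The low part is at most $J\log(4\gamma_1\log p)$; in the high part, $j>J$ and $\beta_j \ge 1$ force $j \le K := \lfloor 4\eta\log p\rfloor$ and $\beta_j \le 4\eta\log p/j$, so by Stirling
$$\sum_{J<j\le K}\log\beta_j \le \sum_{j=1}^K\log(K/j) = K\log K - \log(K!) \le K = 4\eta\log p.$$
Choosing $\eta$ small relative to $\delta$ (say $\eta = \delta/16$) and $p$ large enough, the total is at most $\delta\log p/2$, so $\prod_{\beta_j\ge 1}\beta_j \le c_{\bsgamma,\delta}\, p^{\delta/2}$. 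Combining with the $4\Gamma\log p$ prefactor (using $\log p \le c'_\delta\, p^{\delta/2}$) gives $M \le c''_{\bsgamma,\delta}\, p^\delta$, and substituting into Theorem~\ref{thm1} yields the bounds for $\cP_{p,s}$, $\cQ_{p^2,s}$ and $\cR_{p^2,s}$.

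For Part~2 I simply estimate $\max\uu \le s$, reducing to $M \le s\cdot\prod_{\beta_j \ge 1}\beta_j$. Using the elementary inequality $\log\beta \le \beta^t/t$ for $\beta\ge 1$ and $t>0$ (which follows because $\beta \mapsto \beta^t - t\log\beta$ is increasing on $[1,\infty)$ with value $1$ at $\beta=1$),
$$\sum_{\beta_j \ge 1}\log\beta_j \le \tfrac{1}{t}\sum_j\beta_j^t = \tfrac{(4\log p)^t}{t}\sum_j\gamma_j^t.$$
Choosing any $t < 1$ for which $\sum_j\gamma_j^t<\infty$ (the useful regime), the right-hand side is $o(\log p)$, so $\prod_{\beta_j \ge 1}\beta_j \le c_{\bsgamma,\delta,t}\, p^\delta$, hence $M \le c'\, s \, p^\delta$, and Theorem~\ref{thm1} completes the proof.

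The main obstacle is Part~1. Under only $\sum_j\gamma_j<\infty$, crude tools such as AM--GM bound $\prod_{\beta_j \ge 1}\beta_j$ only by $p^{O(1)}$, which is useless against the $\sqrt p$ gained from Theorem~\ref{thm1}. The decisive input is that monotonicity plus summability force $j\gamma_j \to 0$; the split at $J(\eta)$ together with the Stirling estimate $\sum_{j=1}^K\log(K/j) \le K$ is what converts this decay into a $p^\delta$ bound for arbitrary $\delta > 0$.
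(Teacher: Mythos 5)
Your Part~1 is correct and follows essentially the same route as the paper: reduce via Theorem~\ref{thm1} to the product of the factors $\beta_j=4\gamma_j\log p$ over the indices with $\beta_j\ge 1$ (the paper calls the largest such index $\ell$), control the prefactor $(\max\uu)\,\gamma_{\max\uu}\le\Gamma_0:=\sum_j\gamma_j$ by monotonicity, and show that $\prod_{j\le\ell}\beta_j=O(p^{\delta/2})$ by exploiting that the tail of $(\gamma_j)$ decays like $1/j$ with an arbitrarily small constant, combined with a Stirling-type estimate. The paper phrases this decay through the tail sums $\Gamma_{k_0}=\sum_{j>k_0}\gamma_j$ and the bound $\gamma_k\le\Gamma_{k_0}/(k-k_0)$, arriving at $(4\Gamma_{k_0}\log p)^{\ell-k_0}/(\ell-k_0)!\le p^{4e\Gamma_{k_0}}$; your version via $j\gamma_j\to 0$ and $\sum_{j\le K}\log(K/j)\le K$ is the same mechanism in slightly different clothing.

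Part~2, however, has a genuine gap. The hypothesis supplies \emph{one} $t>0$ with $\sum_j\gamma_j^t<\infty$; you are not free to ``choose any $t<1$'' with this property, and in the only interesting regime no such $t<1$ exists: if $\sum_j\gamma_j^{t}<\infty$ for some $t<1$, then (since $\gamma_j\to0$, so eventually $\gamma_j\le\gamma_j^t$) also $\sum_j\gamma_j<\infty$ and Part~1 already gives a stronger, $s$-free bound. For weights that Part~2 is actually meant to cover, e.g.\ $\gamma_j=j^{-1/2}$ (where $\sum_j\gamma_j^t<\infty$ only for $t>2$), your estimate yields $\sum_{\beta_j\ge1}\log\beta_j\le\frac{(4\log p)^t}{t}\sum_j\gamma_j^t=\Theta((\log p)^t)$, hence only $\prod_{\beta_j\ge1}\beta_j\le\exp(C(\log p)^t)$, which is super-polynomial in $p$ and useless against the gain of $\sqrt p$. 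Moreover this is not merely a lossy estimate: for $\gamma_j=j^{-1/2}$ a direct computation gives $\prod_{j\le(4\log p)^2}\beta_j=\exp\bigl(\tfrac12(4\log p)^2(1+o(1))\bigr)$, so the quantity $\prod_{\beta_j\ge1}\beta_j$ to which you reduce genuinely exceeds every fixed power of $p$ once $s\ge(4\log p)^2$; no sharper bound on that product can close the gap. For what it is worth, the paper's own proof of Part~2 takes the analogous route, using $\gamma_h\le\Gamma_{h_0,t}(h-h_0)^{-1/t}$ and $((\ell-h_0)!)^{1/t}$ in place of $(\ell-k_0)!$, and its decisive step --- bounding $\bigl(1+4\Gamma_{h_0,t}e^{1/t}(\log p)(\ell-h_0)^{-1/t}\bigr)^{(\ell-h_0)/t}$ by $\exp(4\Gamma_{h_0,t}e^{t}\log p)$ --- is exactly the point where the case $t>1$ is delicate for the same reason. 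So to establish Part~2 in the regime $t>1$ you would need a genuinely different idea, not a refinement of the estimate for $\prod_{\beta_j\ge1}\beta_j$.
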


The proof of Theorem~\ref{thm2} will be given in Section~\ref{prThm2}.\\

Note that the results in Point 1. of Theorem~\ref{thm2} imply strong polynomial tractability for the weighted star discrepancy. We show this for the $p$-set $\cP_{p,s}$. Assume that $\sum_j  \gamma_j < \infty$. Fix $\delta>0$. For $\varepsilon>0$, let $p$ be the smallest prime number that is larger or equal to $\lceil (c_{\bsgamma,\delta} \varepsilon^{-1})^{\frac{2}{1-2\delta}} \rceil =:M$. Then we have $D_{p,\bsgamma}^{\ast}(\cP_{p,\bsgamma}) \le \varepsilon$ and hence $$N_{\min}(\varepsilon,s) \le p < 2 M =2 \lceil (c_{\bsgamma,\delta} \varepsilon^{-1})^{\frac{2}{1-2\delta}} \rceil ,$$ where we used Bertrand's postulate which tells us that $M \le p < 2 M$. Hence the weighted star discrepancy is strongly polynomially tractable with $\varepsilon$-exponent at most 2.\\

In the same way, the results in Point 2. of Theorem~\ref{thm2} imply polynomial tractability for the weighted star discrepancy. We show this for $\cP_{p,s}$. Assume that $\sum_j \gamma_j^t < \infty$ for some $t > 0$. Fix $\delta>0$. For $\varepsilon>0$, let $p$ be the smallest prime number that is larger or equal to $\lceil (s c_{\bsgamma,\delta, t} \varepsilon^{-1})^{\frac{2}{1-2\delta}} \rceil =:M$. Then we have $D_{p,\bsgamma}^{\ast}(\cP_{p,s}) \le \varepsilon$ and hence $$N_{\min}(\varepsilon,s) \le p < 2 M =2 \lceil (s c_{\bsgamma,\delta, t} \varepsilon^{-1})^{\frac{2}{1-2\delta}} \rceil ,$$ where we used again Bertrand's postulate which tells us that $M \le p < 2 M$. Hence the weighted star discrepancy is polynomially tractable with $s$-exponent and $\varepsilon$-exponent at most 2.\\

With the following table we put the result from Theorem~\ref{thm2} into the context of the known results from Section~\ref{sec1}. 
The second column ``point set $\cP$'' gives information about the point set, the column ``$\cP=\cP(\bsgamma)$'' indicates whether the point set depends on $\bsgamma$ or not, the column ``SPT'' displays the conditions on product weights under which strong polynomial tractability is achieved and the last column ``$\varepsilon$-exponent'' displays the respective $\varepsilon$-exponents of strong polynomial tractability.  
\begin{center}
\begin{tabular}{l||c|c|c|c}
 & point set $\cP$ & $\cP=\cP(\bsgamma)$ & SPT & $\varepsilon$-exponent  \\
\hline\hline
Theorem~\ref{thHPS} & existence & NO & not possible & --   \\
Theorem~\ref{thHPS2} & CBC & YES & $\sum_j \gamma_j < \infty$ & 1 \\
Theorem~\ref{thm_afterWang} & explicit & NO & $ \sum_j \gamma_j j \log j < \infty$ & 1 \\
Theorem~\ref{thm2} & explicit & NO &  $\sum_j \gamma_j < \infty $ & $\le 2$
\end{tabular}
\end{center}

\section{The proofs}

\subsection{Auxiliary results}

For $M \in \NN$, $M \ge 2$, put $C(M)=(-M/2,M/2]\cap \ZZ$ and $C_s(M)=C(M)^s$ the $s$-fold Cartesian product of $C(M)$. Further we write $C_s^{\ast}(M)=C_s(M)\setminus\{\bszero\}$. For $h \in C(M)$ put $r(h)=\max(1,|h|)$ and for $\bsh=(h_1,\ldots ,h_s)\in C_s(M)$ put $r(\bsh)=\prod_{j=1}^sr(h_j).$ The following result is from Niederreiter~\cite[Theorem~3.10]{niesiam} (in a slightly simplified form).

\begin{lemma}[Niederreiter]\label{le1}
For integers $M,N \ge 2$ and $\bsy_0,\ldots,\bsy_{N-1}\in \ZZ^s$, let $\cP=\{\npoint\}$ be the $N$-element point set consisting of the fractional parts $\bsx_n=\{\bsy_n/M\}$ for $n=0,\ldots,N-1$. Then we have $$D_N^{\ast}(\cP) \le \frac{s}{M} + \frac{1}{2}\sum_{\bsh \in C_s^{\ast}(M)} \frac{1}{r(\bsh)}\left|\frac{1}{N}\sum_{n=0}^{N-1}\exp(2 \pi \icomp \bsh \cdot \bsy_n /M)\right|,$$ where ``$\cdot$'' denotes the usual inner-product in $\RR^s$ and where $\icomp=\sqrt{-1}$.
\end{lemma}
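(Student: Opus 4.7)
The plan is to derive the bound via the standard two-step route that Niederreiter uses: first reduce the supremum defining $D_N^*$ to a maximum over a finite grid of rational test points at the cost of the additive $s/M$ term, and then represent the resulting discrepancy at these grid points by a finite Fourier expansion over characters of $\ZZ/M\ZZ$, which exposes the exponential sum $\frac{1}{N}\sum_n \exp(2\pi\icomp\bsh\cdot\bsy_n/M)$.

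First I would observe that every coordinate of every $\bsx_n$ lies in $\{0,1/M,\ldots,(M-1)/M\}$, so the counting function $A_N(\prod_j[0,\alpha_j))$ is constant as each $\alpha_j$ ranges over an interval of length $1/M$. Hence the supremum defining $D_N^*(\cP)$ differs from $\max_{\bsk\in\{1,\ldots,M\}^s}|\Delta(\bsk/M)|$ by at most the change in the product $\alpha_1\cdots\alpha_s$ when each $\alpha_j$ is rounded up to the nearest multiple of $1/M$. A telescoping estimate bounds this rounding error by $s/M$, giving
\[
\disc(\cP)\le \frac{s}{M} + \max_{\bsk\in\{1,\ldots,M\}^s}\bigl|\Delta(\bsk/M)\bigr|.
\]

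Next I would expand the one-dimensional indicator as a discrete Fourier series. For $k\in\{1,\ldots,M\}$ and $\ell\in\{0,\ldots,M-1\}$, the indicator $\mathbf{1}_{[0,k/M)}(\ell/M)$ can be written as $\sum_{h\in C(M)} a_h(k)\,\exp(2\pi\icomp h\ell/M)$, where $a_0(k)=k/M$ and, for $h\ne 0$, a direct evaluation of the geometric sum gives $|a_h(k)|\le \frac{1}{M|1-\exp(-2\pi\icomp h/M)|}\le \frac{1}{2|h|}$, using $|1-\ee^{\icomp\theta}|\ge 2|\theta|/\pi$ on $|\theta|\le\pi$. Taking the $s$-fold product, the indicator of the box $\prod_j[0,k_j/M)$ evaluated at $\bsy_n/M$ factors as $\sum_{\bsh\in C_s(M)}\bigl(\prod_j a_{h_j}(k_j)\bigr)\exp(2\pi\icomp\bsh\cdot\bsy_n/M)$. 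Averaging over $n=0,\ldots,N-1$, the term $\bsh=\bszero$ produces precisely $k_1\cdots k_s/M^s=\alpha_1\cdots\alpha_s$, which cancels the volume term in $\Delta$.

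For the remaining $\bsh\in C_s^{\ast}(M)$, I would split coordinates into those with $h_j=0$ (contributing $|a_0(k_j)|\le 1$) and those with $h_j\ne 0$ (contributing $|a_{h_j}(k_j)|\le 1/(2|h_j|)=1/(2r(h_j))$). The product bound is therefore $\prod_{j=1}^s \frac{1}{2r(h_j)}$ on the nonzero coordinates, which in view of $r(0)=1$ equals $1/(2^{|\{j:h_j\ne 0\}|}\,r(\bsh))\le 1/(2\,r(\bsh))$ whenever $\bsh\ne\bszero$. Summing over $\bsh\in C_s^{\ast}(M)$, taking the maximum over $\bsk$ inside the absolute value, and then using the triangle inequality to pull the maximum past the $\bsh$-sum yields exactly
\[
\max_{\bsk}|\Delta(\bsk/M)| \le \frac{1}{2}\sum_{\bsh\in C_s^{\ast}(M)}\frac{1}{r(\bsh)}\left|\frac{1}{N}\sum_{n=0}^{N-1}\exp(2\pi\icomp\bsh\cdot\bsy_n/M)\right|,
\]
and combining with the grid-reduction step finishes the proof.

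The one genuinely delicate part is the Fourier-coefficient estimate together with the factor $1/2$: one must verify the sharp inequality $|a_h(k)|\le 1/(2|h|)$ (rather than the weaker bound with $\pi/(2M\sin(\pi h/M))$) and check that the combinatorics of the $h_j=0$ coordinates really do compress into a single leading $1/2$ outside the sum. The grid-reduction step, while intuitive, also requires a small telescoping argument to avoid an unwanted $s$-dependent constant in front of $1/M$; this is where keeping the monotonicity of $A_N$ and of the product $\alpha_1\cdots\alpha_s$ aligned under coordinate-wise rounding is essential.
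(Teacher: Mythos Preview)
The paper does not give its own proof of this lemma: it is quoted (``in a slightly simplified form'') from \cite[Theorem~3.10]{niesiam} and used as a black box. So there is no in-paper argument to compare against.

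Your sketch is the standard route to the Erd\H{o}s--Tur\'an--Koksma type bound and is essentially correct. One small arithmetic slip: in the one-dimensional Fourier coefficient you wrote
\[
|a_h(k)|\le \frac{1}{M\,|1-\exp(-2\pi\icomp h/M)|}\le \frac{1}{2|h|},
\]
but summing the geometric series gives $|a_h(k)|=\dfrac{|1-\exp(-2\pi\icomp hk/M)|}{M\,|1-\exp(-2\pi\icomp h/M)|}\le \dfrac{2}{M\,|1-\exp(-2\pi\icomp h/M)|}$, i.e.\ an extra factor $2$ in the numerator. Combined with Jordan's inequality $|1-\ee^{\icomp\theta}|=2|\sin(\theta/2)|\ge 2|\theta|/\pi$ (giving $|1-\exp(-2\pi\icomp h/M)|\ge 4|h|/M$), this still lands exactly on $|a_h(k)|\le 1/(2|h|)$, so the final bound and the leading $\tfrac12$ in front of the $\bsh$-sum are correct. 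The grid-reduction step and the handling of the $h_j=0$ coordinates are fine as written.
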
 

Now we extend this result to the weighted star discrepancy:

\begin{lemma}\label{le2}
For integers $M,N \ge 2$ and $\bsy_0,\ldots,\bsy_{N-1}\in \ZZ^s$, let $\cP=\{\npoint\}$ be the $N$-element point set consisting of the fractional parts $\bsx_n=\{\bsy_n/M\}$ for $n=0,\ldots,N-1$. Then we have
\begin{align*}
\gdisc(\cP) \le  \max_{\emptyset \not= \uu \subseteq [s]} \gamma_{\uu} \frac{|\uu|}{M}+ \max_{\emptyset \not= \uu \subseteq [s]} \gamma_{\uu} \sum_{\bsh \in C_{|\uu|}^{\ast}(M)} \frac{1}{r(\bsh)} \left|\frac{1}{N}\sum_{n=0}^{N-1} \exp(2 \pi \icomp \bsh \cdot \bsy_{n,\uu} /M) \right|,
\end{align*}
where $\bsy_{n,\uu} \in [0,1)^{|\uu|}$ is the projection of $\bsy_n$ to the coordinates given by $\uu$.
\end{lemma}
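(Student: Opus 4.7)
The plan is to reduce the weighted star discrepancy to a maximum over $\uu$ of (weighted) classical star discrepancies of the coordinate projections of $\cP$, and then apply Niederreiter's Lemma~\ref{le1} to each projection.

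First I would observe that for a fixed nonempty $\uu \subseteq [s]$, the point $(\bsz_{\uu}, 1)$ has all coordinates outside $\uu$ equal to $1$, and the local discrepancy $\Delta$ is insensitive to coordinates whose anchor values are $1$: the indicator $\mathbf{1}_{[0,\alpha_i)}$ at $\alpha_i = 1$ equals $\mathbf{1}_{[0,1)}$, and the product $\alpha_1 \cdots \alpha_s$ contributes a factor $1$ for those coordinates. Thus
\begin{equation*}
\sup_{\bsz \in [0,1]^s}|\Delta((\bsz_{\uu},1))| \;=\; D_N^{\ast}(\cP_{\uu}),
\end{equation*}
where $\cP_{\uu} = \{\bsx_{0,\uu},\ldots,\bsx_{N-1,\uu}\}$ is the $|\uu|$-dimensional projection of $\cP$ onto the coordinates in $\uu$. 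Consequently,
\begin{equation*}
\gdisc(\cP) \;=\; \max_{\emptyset \neq \uu \subseteq [s]} \gamma_{\uu}\, D_N^{\ast}(\cP_{\uu}).
\end{equation*}

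Next, since $\bsx_n = \{\bsy_n/M\}$, the projection $\cP_{\uu}$ is exactly the fractional-parts point set $\{\bsy_{n,\uu}/M\}_{n=0}^{N-1}$ in $[0,1)^{|\uu|}$, which is of precisely the form to which Lemma~\ref{le1} applies (with dimension $|\uu|$ in place of $s$). That lemma yields
\begin{equation*}
D_N^{\ast}(\cP_{\uu}) \;\le\; \frac{|\uu|}{M} + \frac{1}{2}\sum_{\bsh \in C_{|\uu|}^{\ast}(M)} \frac{1}{r(\bsh)} \left|\frac{1}{N}\sum_{n=0}^{N-1} \exp(2 \pi \icomp \, \bsh \cdot \bsy_{n,\uu}/M)\right|.
\end{equation*}

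Finally I would multiply through by $\gamma_{\uu}$, take the maximum over nonempty $\uu \subseteq [s]$, and use the elementary inequality $\max_{\uu}(a_{\uu} + b_{\uu}) \le \max_{\uu} a_{\uu} + \max_{\uu} b_{\uu}$ (together with discarding the harmless factor $\tfrac{1}{2}$) to split the bound into the two separate maxima as displayed in the statement.

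I do not anticipate a substantial obstacle; the only point that needs care is the reduction $\sup_{\bsz}|\Delta((\bsz_{\uu},1))| = D_N^{\ast}(\cP_{\uu})$, which requires noting that setting the anchors outside $\uu$ equal to $1$ turns the corresponding indicator factors into $1$'s, so that the $s$-dimensional local discrepancy function collapses to the $|\uu|$-dimensional one for the projected point set. Once this identification is in place, Lemma~\ref{le1} and two applications of the max inequality finish the argument.
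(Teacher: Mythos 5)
Your proposal is correct and follows essentially the same route as the paper: reduce $\gdisc(\cP)$ to $\max_{\uu}\gamma_{\uu}D_N^{\ast}(\cP_{\uu})$ via the observation that anchoring the coordinates outside $\uu$ at $1$ collapses $\Delta$ to the projected point set, then apply Lemma~\ref{le1} in dimension $|\uu|$ and split the maximum over the two terms. The only cosmetic difference is that you assert equality in the reduction step (which is valid, since sup and a finite max commute) while the paper only records the inequality it needs.
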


\begin{proof}
We have $$\gdisc(\cP)=\sup_{\bsz \in (0,1]^s} \max_{\emptyset \not=\uu \subseteq [s]} \gamma_{\uu} |\Delta_{\cP}((\bsz_{\uu},1))| \le \max_{\emptyset \not=\uu \subseteq [s]} \gamma_{\uu} \disc(\cP_{\uu}),$$ where $\cP_{\uu}=\{\bsx_{0,\uu},\ldots ,\bsx_{N-1,\uu}\}$ in $[0,1)^{|\uu|}$ consists of the points of $\cP$ projected to the components whose indices are in $\uu$. For any $\emptyset \not=\uu \subseteq [s]$ we have from Lemma~\ref{le1} that $$\disc(\cP_{\uu})\le \frac{|\uu|}{M} + \sum_{\bsh \in C_{|\uu|}^{\ast}(M)} \frac{1}{r(\bsh)} \left|\frac{1}{N}\sum_{n=0}^{N-1} \exp(2 \pi \icomp \bsh \cdot \bsy_{n,\uu} /M) \right|,$$ and the result follows.
\end{proof}

\subsection{The proof of Theorem~\ref{thm1}}\label{prThm1}

For the proof of Theorem~\ref{thm1} we use results which were already stated in the book of Hua and Wang \cite{huawang}. 

\begin{lemma}\label{le3}
Let $p$ be a prime number and let $s \in \NN$. Then for all $h_1,\ldots,h_s\in \ZZ$ such that $p \nmid h_j$ for at least one $j \in [s]$ we have $$\left|\sum_{n=0}^{p-1} \exp(2 \pi \icomp(h_1 n+h_2 n^2+\cdots+h_s n^s)/p)\right| \le (s-1) \sqrt{p}.$$ 
\end{lemma}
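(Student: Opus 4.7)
The statement is a version of the classical Weil bound for exponential sums of polynomial arguments over the prime field $\FF_p$. The plan is to invoke the Weil bound directly: if $f \in \FF_p[x]$ has degree $e \ge 1$ with $\gcd(e, p) = 1$, then $\bigl|\sum_{n \in \FF_p} \exp(2\pi \icomp f(n)/p)\bigr| \le (e-1)\sqrt{p}$. This result is established in \cite[Chapter~4]{huawang}, which is the route the paper presumably takes.

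The first concrete step is to set $d := \max\{j \in [s] : p \nmid h_j\}$; such $d$ exists by hypothesis. Since every term $h_j n^j$ with $p \mid h_j$ contributes $\exp(0) = 1$ to the exponential, the sum in question equals
$$S = \sum_{n=0}^{p-1} \exp\!\left(\frac{2\pi \icomp}{p}\bigl(h_1 n + h_2 n^2 + \cdots + h_d n^d\bigr)\right),$$
and the polynomial inside has degree exactly $d$ in $\FF_p[x]$. In the regime $d < p$ (which is automatic when $s \le p - 1$), one has $\gcd(d, p) = 1$, and the Weil bound directly yields $|S| \le (d-1)\sqrt{p} \le (s-1)\sqrt{p}$.

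In the remaining regime $d \ge p$ (which forces $s \ge p$), I would first reduce the polynomial inside the exponential by Fermat's little theorem ($n^p \equiv n \pmod{p}$), iteratively producing a polynomial $g \in \FF_p[x]$ of degree at most $p - 1$ that represents the same function on $\FF_p$ as $\tilde f(x) := h_1 x + \cdots + h_d x^d$. Since $\deg g \le p - 1$, one automatically has $\gcd(\deg g, p) = 1$, so Weil applies and gives $|S| \le (\deg g - 1)\sqrt{p} \le (p - 2)\sqrt{p} \le (s-1)\sqrt{p}$, the last inequality using $s \ge p$.

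The main obstacle is the Weil bound itself, which is a deep classical result; I would not attempt to re-derive it, since its proof requires either Weil's algebro-geometric machinery or an intricate elementary Gauss/Jacobi-sum argument as carried out in \cite{huawang}. A minor bookkeeping point is to verify that $g$ remains non-constant on $\FF_p$ so that Weil is applicable in the reduced case, which I would do by tracking how the Fermat reduction acts on the leading term of $\tilde f$ and appealing to $p \nmid h_d$.
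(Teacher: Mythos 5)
Your main case coincides with the paper's proof, which is a one-line appeal to the Weil bound with all details deferred to a citation: discard the terms with $p \mid h_j$, observe that the remaining polynomial has degree $d = \max\{j : p\nmid h_j\}$ with $1 \le d < p$, hence $\gcd(d,p)=1$, and apply Weil. That is exactly the intended argument, and it covers every case in which the lemma is actually invoked later in the paper.

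The extra regime $d \ge p$ you add, however, contains a step that fails. You propose to verify that the Fermat-reduced polynomial $g$ is non-constant by ``tracking how the reduction acts on the leading term of $\tilde f$ and appealing to $p \nmid h_d$'', but $p \nmid h_d$ does not prevent the reduction from cancelling the leading term against lower-order ones: for $\tilde f(x) = x^p - x$ (i.e.\ $h_p = 1$, $h_1 = -1$, all other $h_j = 0$) one has $p \nmid h_p$, yet $g \equiv 0$ and the sum equals $p$. The correct repair in this regime is not to apply Weil when $g$ is constant but to fall back on the trivial bound $|S| \le p$, which suffices because $s \ge d \ge p$ gives $(s-1)\sqrt{p} \ge (p-1)\sqrt{p} \ge p$ for $p \ge 3$. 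Note that for $p = 2$, $s = 2$ this very example ($h_1 = -1$, $h_2 = 1$) yields $|S| = 2 > \sqrt{2} = (s-1)\sqrt{p}$, so the lemma as literally stated fails in that corner case; any complete proof must either exclude it or note that the bound is vacuous there for the paper's purposes.
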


\begin{proof}
The result follows from a bound from A. Weil \cite{weil} on exponential sums which is widely known as {\it Weil bound}. For details we refer to \cite{dick2014}. 
\end{proof}

\begin{lemma}\label{le5}
Let $p$ be a prime number and let $s \in \NN$. Then for all $h_1,\ldots,h_s\in \ZZ$ such that $p \nmid h_j$ for at least one $j \in [s]$ we have $$\left|\sum_{n=0}^{p^2-1} \exp(2 \pi \icomp(h_1 n+h_2 n^2+\cdots+h_s n^s)/p^2)\right| \le (s-1) p.$$ 
\end{lemma}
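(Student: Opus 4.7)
The plan is to prove Lemma~\ref{le5} by lifting the exponential sum modulo $p^2$ to a counting problem over $\FF_p$, in analogy with how Weil's bound was used in the proof of Lemma~\ref{le3}. The underlying observation is that $(bp)^k \equiv 0\pmod{p^2}$ for $k\ge 2$, which linearises modular powers.

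First I would write each $n\in\{0,1,\ldots,p^2-1\}$ uniquely as $n=a+bp$ with $a,b\in\{0,1,\ldots,p-1\}$. Setting $f(x)=h_1 x+h_2 x^2+\cdots+h_s x^s$, the binomial theorem yields $(a+bp)^j\equiv a^j+j a^{j-1}bp\pmod{p^2}$, and summing in $j$ gives $f(a+bp)\equiv f(a)+p\,b\,f'(a)\pmod{p^2}$, where $f'$ is the formal derivative. Substituting into the exponential sum separates the $a$- and $b$-variables:
\[
S:=\sum_{n=0}^{p^2-1}\exp\!\left(\frac{2\pi\icomp f(n)}{p^2}\right)
=\sum_{a=0}^{p-1}\exp\!\left(\frac{2\pi\icomp f(a)}{p^2}\right)\sum_{b=0}^{p-1}\exp\!\left(\frac{2\pi\icomp\, b\,f'(a)}{p}\right).
\]
The inner geometric sum over $b$ equals $p$ when $f'(a)\equiv 0\pmod p$ and $0$ otherwise, so
\[
|S|\le p\cdot\#\bigl\{a\in\{0,\ldots,p-1\}:f'(a)\equiv 0\pmod p\bigr\}.
\]

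The final step is to bound this count by $s-1$. Since $\deg f'\le s-1$, whenever $f'$ is not identically zero in $\FF_p[x]$ it has at most $s-1$ roots in $\FF_p$, which yields $|S|\le (s-1)p$. The main obstacle is the degenerate case $f'\equiv 0\pmod p$, which forces $p\mid j h_j$ for every $j\in[s]$. For $s<p$ this contradicts the hypothesis (since $\gcd(j,p)=1$ for every $j\in[s]$), and for $s\ge p+1$ the trivial bound $|S|\le p^2\le (s-1)p$ already suffices; only when $s$ is close to $p$ does one need to exploit the stronger identity $(a+bp)^p\equiv a^p\pmod{p^2}$ together with a decomposition of $f$ into a $p$-power part plus a $p$-multiple in order to recover the extra factor of $p$.
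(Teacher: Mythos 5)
The overall strategy you use---write $n=a+bp$, linearise via $f(a+bp)\equiv f(a)+bp\,f'(a)\pmod{p^2}$, let the geometric sum over $b$ kill all $a$ with $f'(a)\not\equiv 0\pmod p$, and count the surviving $a$ as roots of $f'$ over $\FF_p$---is exactly the standard argument behind the reference the paper cites for this lemma (Hua--Wang, Lemma~4.6), and it is complete whenever $f'$ does not reduce to the zero polynomial of $\FF_p[x]$, as well as (trivially) when $s\ge p+1$. Your identification of the degenerate case ($p\mid jh_j$ for all $j$, forcing $p\mid j_0$ for the index with $p\nmid h_{j_0}$, hence $s\ge p$) is also correct.

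The gap is the remaining case $s=p$ (so $p\mid h_j$ for $j<p$ and $p\nmid h_p$), which you flag but do not close---and it cannot be closed, because the stated inequality actually fails there. Your identity $(a+bp)^p\equiv a^p\pmod{p^2}$ shows that in this case the summand is independent of $b$, so $S=p\sum_{a=0}^{p-1}\exp\bigl(2\pi\icomp(h_pa^p+pg(a))/p^2\bigr)$ with $g(x)=\sum_{j<p}(h_j/p)x^j$; one would then need this sum of $p$ unit vectors to have modulus at most $p-1$, and ``not all terms are equal'' only yields modulus strictly less than $p$, which is not enough since the terms are $p^2$-th roots of unity. Concretely, for $p=s=3$ and $f(x)=x^3$ (hypothesis satisfied, since $3\nmid h_3=1$) one has $n^3\bmod 9\in\{0,1,8\}$, each value attained three times, whence $|S|=3\,|1+2\cos(2\pi/9)|\approx 7.60>6=(s-1)p$; likewise $p=s=2$, $f(x)=x^2$ gives $|S|=|2+2\icomp|=2\sqrt2>2$. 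So the lemma requires the implicit restriction $s<p$ (under which the degenerate case cannot occur and your argument is complete); this restriction is standard for the $p$-sets and does not affect the regime in which the paper's discrepancy bounds are non-trivial, but your sketch of how to ``recover the extra factor of $p$'' in the case $s=p$ should be deleted rather than completed.
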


\begin{proof}
See \cite[Lemma~4.6]{huawang}. 
\end{proof}

\begin{lemma}\label{le6}
Let $p$ be a prime number and let $s \in \NN$. Then for all $h_1,\ldots,h_s \in \ZZ$ such that $p \nmid h_j$ for at least one $j \in [s]$ we have $$\left|\sum_{a=0}^{p-1} \sum_{k=0}^{p-1} \exp(2 \pi \icomp k (h_1 +h_2 a+\cdots+h_s a^{s-1})/p)\right| \le (s-1) p.$$ 
\end{lemma}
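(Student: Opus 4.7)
The plan is to evaluate the inner sum over $k$ explicitly and then count how often it contributes.

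First, I would fix $a \in \{0,1,\ldots,p-1\}$ and set $P(a) := h_1 + h_2 a + \cdots + h_s a^{s-1}$. The inner sum $\sum_{k=0}^{p-1} \exp(2\pi \icomp k P(a)/p)$ is a geometric sum in $k$, which by the standard orthogonality relation for additive characters on $\ZZ/p\ZZ$ equals $p$ if $p \mid P(a)$ and $0$ otherwise. Thus the full double sum equals $p \cdot N$, where
\[
N := \#\{a \in \{0,1,\ldots,p-1\} : P(a) \equiv 0 \pmod p\}.
\]

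Next I would use the hypothesis that $p \nmid h_j$ for at least one $j \in [s]$. This means that $P$, viewed as an element of $\FF_p[a]$, is a nonzero polynomial of degree at most $s-1$. Since $\FF_p$ is a field, such a polynomial has at most $s-1$ roots in $\FF_p$, so $N \le s-1$ and the stated bound $(s-1)p$ follows immediately.

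There is no real obstacle here; the argument rests solely on the orthogonality of additive characters and the fact that a nonzero polynomial of degree $\le s-1$ over a field has at most $s-1$ roots. The only subtle point worth stating carefully is the reduction: although the exponent $k(h_1+h_2 a +\cdots+h_s a^{s-1})$ is an integer expression, it only enters through its residue modulo $p$, which is why the polynomial $P$ may be regarded as living in $\FF_p[a]$ and the nonvanishing of some $h_j$ modulo $p$ translates into nonvanishing of $P$ in $\FF_p[a]$.
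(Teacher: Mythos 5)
Your argument is correct and is essentially the same as the paper's: evaluate the inner sum over $k$ by orthogonality of additive characters, so the double sum equals $p$ times the number of roots of $h_1+h_2 a+\cdots+h_s a^{s-1}$ modulo $p$, and then bound that count by $s-1$ since the polynomial is nonzero in $\FF_p[a]$ and has degree at most $s-1$. Your write-up is in fact slightly more explicit than the paper's about why the root-count bound applies.
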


\begin{proof}
Under the assumption that $p \nmid \gcd(h_1,\ldots,h_s)$, the number of solutions of the congruence $h_1 + h_2 x + \cdots + h_s x^{s-1} \equiv 0 \pmod{p}$ in $\{0,1,\ldots,p-1\}$ is at most $s-1$. Hence
\begin{align*}
\left|\sum_{a=0}^{p-1} \sum_{k=0}^{p-1} \exp(2 \pi \icomp k (h_1 +h_2 a+\cdots+h_s a^{s-1})/p)\right| & = \left|\sum_{a=0\atop h_1 +h_2 a+\cdots+h_s a^{s-1} \equiv 0 \pmod{p}}^{p-1} p\right|\\
& \le (s-1)p.
\end{align*}
\end{proof}

Now we can give the proof of Theorem~\ref{thm1}:

\begin{proof}
\begin{itemize}
\item We consider $\cP_{p,s}$: Here $\bsx_n$ is of the form $\bsx_n=\{\bsy_n/M\}$, where $\bsy_n=(n,n^2,\ldots,n^s) \in \ZZ^s$ and $M=p$. 

For $\emptyset \not= \uu \subseteq [s]$ we obtain from Lemma~\ref{le3} 
\begin{align*}
\sum_{\bsh \in C_{|\uu|}^{\ast}(p)} \frac{1}{r(\bsh)} \left|\frac{1}{p}\sum_{n=0}^{p-1} \exp(2 \pi \icomp \bsh \cdot \bsy_{n,\uu} /p) \right| & \le \frac{1}{p} \sum_{\bsh \in C_{|\uu|}^{\ast}(p)} \frac{1}{r(\bsh)} (\max \uu-1) \sqrt{p} \\
& \le \frac{\max \uu}{\sqrt{p}} \left(1+S_p\right)^{|\uu|},
\end{align*}
where $S_p=\sum_{h \in C_1^{\ast}(p)} |h|^{-1}$. A straight-forward estimate gives 
\begin{equation}\label{bdSN1}
S_p \le 2 \sum_{h=1}^{\lfloor p/2\rfloor} \frac{1}{h} \le 2 \left(1+\int_1^{p/2} \frac{\rd t}{t}\right) =2(1 + \log (p / 2)).
\end{equation}
Hence we obtain 
\begin{align*}
\sum_{\bsh \in C_{|\uu|}^{\ast}(p)} \frac{1}{r(\bsh)} \left|\frac{1}{p}\sum_{n=0}^{p-1} \exp(2 \pi \icomp \bsh \cdot \bsy_{n,\uu} /p) \right| & \le \frac{\max \uu}{\sqrt{p}} \left(2+2\log (p/2) \right)^{|\uu|},
\end{align*}
Inserting this into Lemma~\ref{le2} gives
\begin{align*}
D_{p,\bsgamma}^{\ast}(\cP_{p,s}) & \le  \max_{\emptyset \not= \uu \subseteq [s]} \gamma_{\uu}\frac{|\uu|}{p} + \max_{\emptyset \not= \uu \subseteq [s]} \gamma_{\uu} \frac{\max \uu}{\sqrt{p}} \left(2+2\log (p/2) \right)^{|\uu|}\\
& \le \frac{2}{\sqrt{p}} \max_{\emptyset \not= \uu \subseteq [s]} \gamma_{\uu} (\max \uu) \left(4 \log p\right)^{|\uu|}.
\end{align*}

\item We consider $\cQ_{p^2,s}$: Here $\bsx_n$ is of the form $\bsx_n=\{\bsy_n/M\}$, where $\bsy_n=(n,n^2,\ldots,n^s) \in \ZZ^s$. Let $M=p^2$.  

For $\bsh=(h_j)_{j \in \uu}\in \ZZ^{|\uu|}$ we write $p|\bsh$ if $p|h_j$ for all $j \in \uu$ and $p \nmid \bsh$ if this is not the case.

For $\emptyset \not= \uu \subseteq [s]$ we obtain from Lemma~\ref{le5} 
\begin{eqnarray*}
\lefteqn{\sum_{\bsh \in C_{|\uu|}^{\ast}(p^2)} \frac{1}{r(\bsh)} \left|\frac{1}{p^2}\sum_{n=0}^{p^2-1} \exp(2 \pi \icomp \bsh \cdot \bsy_{n,\uu} /p^2) \right|}\\
& \le & \sum_{\bsh \in C_{|\uu|}^{\ast}(p^2) \atop p|\bsh} \frac{1}{r(\bsh)}  \left|\frac{1}{p^2}\sum_{n=0}^{p^2-1} \exp(2 \pi \icomp \bsh \cdot \bsy_{n,\uu} /p^2) \right| +\frac{1}{p^2} \sum_{\bsh \in C_{|\uu|}^{\ast}(p^2)\atop p \nmid \bsh} \frac{1}{r(\bsh)}(\max \uu) p. 
\end{eqnarray*}
We consider now the first sum where $p \mid \bsh$. Let $\bsk = \bsh/p \in \mathbb{Z}^\uu$. Since $\bsy_{n,\uu} = (n^j)_{j \in \uu}$ we have from Lemma~\ref{le3} that
\begin{eqnarray*}
\left| \frac{1}{p^2}\sum_{n=0}^{p^2-1} \exp(2 \pi \icomp \bsh \cdot \bsy_{n,\uu} /p^2) \right| & = & \left| \frac{1}{p^2} \sum_{\ell=0}^{p-1} \sum_{n= \ell p}^{\ell p + p-1}  \exp(2 \pi \icomp \bsk \cdot \bsy_{n,\uu} /p) \right| \\ & = & \left|  \frac{1}{p} \sum_{n= 0}^{p-1}  \exp(2 \pi \icomp \bsk \cdot \bsy_{n,\uu} /p) \right| \\ & \le & \frac{\max \uu }{\sqrt{p}}.
\end{eqnarray*}
Further we have
\begin{eqnarray*}
\sum_{\satop{ \bsh \in C_{|\uu|}^{\ast}(p^2) }{p \mid \bsh} } \frac{1}{r(\bsh)} & \le & \frac{1}{p} \sum_{\bsh \in C_{|\uu|}^{\ast}(p^2)} \frac{1}{r(\bsh)} \le \frac{1}{p} (1+ S_{p^2})^{|\uu|},
\end{eqnarray*}
where $S_{p^2}=\sum_{h \in C_1^{\ast}(p^2)} |h|^{-1}$. As before we find that  
\begin{equation*}
S_{p^2} \le 2( 1 +  \log (p^2/2)).
\end{equation*}
Thus we have
\begin{equation*}
\sum_{\bsh \in C_{|\uu|}^{\ast}(p^2)} \frac{1}{r(\bsh)} \left|\frac{1}{p^2}\sum_{n=0}^{p^2-1} \exp(2 \pi \icomp \bsh \cdot \bsy_{n,\uu} /p^2) \right| \le \frac{2 \max \uu}{p} (2+ 2 \log (p^2/2) )^{|\uu|}
\end{equation*}
Inserting this into Lemma~\ref{le2} gives
\begin{align*}
D_{p^2,\bsgamma}^{\ast}(\cQ_{p^2,s}) & \le  \max_{\emptyset \not= \uu \subseteq [s]} \gamma_{\uu}\frac{|\uu|}{p^2} + \max_{\emptyset \not= \uu \subseteq [s]} \gamma_{\uu} \frac{2(\max \uu)}{p} \left(2+2\log (p^2/2) \right)^{|\uu|}\\
& \le \frac{3}{p} \max_{\emptyset \not= \uu \subseteq [s]} \gamma_{\uu} (\max \uu) \left(6 \log p\right)^{|\uu|}.
\end{align*}

\item We consider $\cR_{p^2,s}$: Here $\bsx_{a,k}$ is of the form $\bsx_{a,k}=\{\bsy_{a,k}/M\}$, where $\bsy_{a,k}=(a k,a^2 k,\ldots,a^{s-1} k) \in \ZZ^s$ and $M=p$. 

For $\emptyset \not= \uu \subseteq [s]$ we obtain from Lemma~\ref{le6} 
\begin{align*}
\sum_{\bsh \in C_{|\uu|}^{\ast}(p)} \frac{1}{r(\bsh)} \left|\frac{1}{p^2}\sum_{a=0}^{p-1} \sum_{k=0}^{p-1} \exp(2 \pi \icomp \bsh \cdot \bsy_{a,k,\uu} /p) \right| \le & \frac{1}{p^2} \sum_{\bsh \in C_{|\uu|}^{\ast}(p)} \frac{1}{r(\bsh)} (\max \uu - 1) p \\ 
 \le & \frac{\max \uu}{p}(2+2 \log (p/2) )^{|\uu|},
\end{align*}
where we used \eqref{bdSN1}. Inserting this into Lemma~\ref{le2} gives
\begin{align*}
D_{p^2,\bsgamma}^{\ast}(\cR_{p^2,s}) & \le  \max_{\emptyset \not= \uu \subseteq [s]} \gamma_{\uu}\frac{|\uu|}{p} + \max_{\emptyset \not= \uu \subseteq [s]} \gamma_{\uu} \frac{(\max \uu)}{p} \left(2+2\log (p/2) \right)^{|\uu|}\\
& \le \frac{2}{p} \max_{\emptyset \not= \uu \subseteq [s]} \gamma_{\uu} (\max \uu) \left(4 \log p\right)^{|\uu|}.
\end{align*}
\end{itemize}
\end{proof}

\subsection{The proof of Theorem~\ref{thm2}}\label{prThm2}

\begin{proof}
We give the proof only for $\cP_{p,s}$. The proofs for $\cQ_{p^2,s}$ and $\cR_{p^2,s}$ follow by the same arguments.

\begin{enumerate}
\item Let $\delta > 0$. Assume that $\gamma_1 \ge \gamma_2 \ge \ldots$ and $\sum_j \gamma_j < \infty$. For $k \in \mathbb{N}_0$ let $\Gamma_k := \sum_{j=k+1}^{\infty} \gamma_j < \infty$ and note that $\lim_k \Gamma_k=0$. 
Let  $k_0 \in \mathbb{N}$ be the smallest integer such that $$\Gamma_{k_0} < \frac{\delta}{8 \mathrm{e}},$$ where $\mathrm{e}=\exp(1)$. Note that $k_0$ depends on $\bsgamma$ and $\delta$, but not on $s$ and $p$. 

We have
\begin{equation*}
k \gamma_k \le \sum_{j=1}^k \gamma_j \le \Gamma_0,
\end{equation*}
which implies that for any $k \in \mathbb{N}$ we have
\begin{equation*}
\gamma_k \le \frac{\Gamma_0}{k} < \infty.
\end{equation*}
Thus for any finite set $\uu \subseteq \mathbb{N}$ we have $$\gamma_{\max \uu} \max \uu \le  \Gamma_0 < \infty.$$ Therefore
\begin{align*}
D_{p,\bsgamma}^{\ast}(\cP_{p,s})  \le & \frac{2}{\sqrt{p}}\max_{\emptyset \not= \uu \subseteq [s]} (\max \uu) \prod_{j\in \uu}  \left(4 \gamma_j\log p\right)\\  \le &  \frac{8 \Gamma_0 \log p}{\sqrt{p}} \max_{\emptyset \not= \uu \subseteq [s-1]} \prod_{j \in \uu}(4 \gamma_j \log p) \\ \le & \frac{8 \Gamma_0 \log p}{\sqrt{p}} \prod_{j=1}^\ell (4 \gamma_j \log p),
\end{align*}
where $\ell \in \mathbb{N}_0$ is such that $\gamma_\ell \ge \frac{1}{4\log p} > \gamma_{\ell+1}$. If $\frac{1}{4 \log p} > \gamma_{1}$ then we set $\ell = 0$ and set the empty product $\prod_{j=1}^0 (4 \gamma_j \log p)$ to $1$. For $\ell \le k_0$ we have
\begin{equation*}
D_{p,\bsgamma}^{\ast}(\cP_{p,s})  \le \frac{2 (4 \Gamma_0 \log p)^{k_0 + 1} }{\sqrt{p}}.
\end{equation*}
Now consider the case when $\ell > k_0$. For $k > k_0$ we have
\begin{equation*}
(k-k_0) \gamma_k \le \sum_{j=k_0+1}^k \gamma_j \le \Gamma_{k_0}
\end{equation*}
and therefore $\gamma_k \le \frac{\Gamma_{k_0}}{k-k_0}$. Thus we have
\begin{align*}
D_{p,\bsgamma}^{\ast}(\cP_{p,s})  \le & \frac{2  (4 \Gamma_0 \log p)^{k_0 + 1} }{\sqrt{p}} \prod_{j=k_0+1}^\ell (4\gamma_j \log p) \\ \le &  \frac{2 (4 \Gamma_0 \log p)^{k_0 + 1} }{\sqrt{p}}  \frac{(4 \Gamma_{k_0} \log p)^{\ell - k_0}}{ (\ell-k_0)! }.
\end{align*}
Using Stirling's formula we obtain
\begin{align*}
 \frac{(4\Gamma_{k_0} \log p)^{\ell - k_0} }{(\ell - k_0)!} \le & \frac{1}{\sqrt{2\pi (\ell - k_0)}} \left(\frac{4 \Gamma_{k_0} \mathrm{e} \log p}{\ell - k_0} \right)^{\ell - k_0} \\ \le &  \left(1 + \frac{4 \Gamma_{k_0} \mathrm{e} \log p}{\ell - k_0} \right)^{\ell - k_0}  \\ \le & \mathrm{e}^{4 \Gamma_{k_0} \mathrm{e} \log p} \\ = & p^{4\Gamma_{k_0} \mathrm{e}} \le p^{\delta/2}.
\end{align*}

There exists a constant $c_{\bsgamma, \delta} > 0$ depending on $\bsgamma$ and $\delta$ but not on $s$ and $p$, such that
\begin{equation*}
2 (4 \Gamma_0 \log p)^{k_0+1} \le c_{\bsgamma, \delta} p^{\delta/2} \quad \mbox{for all } p \in \mathbb{N}.
\end{equation*}
Thus we obtain
\begin{align*}
D_{p,\bsgamma}^{\ast}(\cP_{p,s})  \le & \frac{c_{\bsgamma, \delta}}{p^{1/2-\delta}}.
\end{align*}

\item We use the notation from above. Assume now that for some $t > 0$ we have
\begin{equation*}
\sum_{j=1}^\infty \gamma_j^t < \infty.
\end{equation*}
We set
\begin{equation*}
\Gamma_{h, t} = \left( \sum_{j=h+1}^\infty \gamma_j^t \right)^{1/t}
\end{equation*}
and we set $\Gamma_{h} = \Gamma_{h,1}$.

Let $h_0$ be the smallest integer such that
\begin{equation*}
\Gamma_{h_0, t} \le \frac{\delta}{8 \mathrm{e}^t t}.
\end{equation*}
We estimate $\max \uu$ by $s$ to obtain
\begin{align*}
D_{p,\bsgamma}^{\ast}(\cP_{p,s})  \le & \frac{2}{\sqrt{p}}\max_{\emptyset \not= \uu \subseteq [s]} (\max \uu) \prod_{j\in \uu}  \left(4 \gamma_j\log p\right)  \le   \frac{2s}{\sqrt{p}} \max_{\emptyset \not= \uu \subseteq [s]} \prod_{j \in \uu}(4 \gamma_j \log p) \\  \le &  \frac{2s}{\sqrt{p}} \prod_{j=1}^\ell (4 \gamma_j \log p) \le \frac{2s (4 \Gamma_0 \log p)^{h_0}}{\sqrt{p}} \prod_{j=h_0+1}^\ell (4 \gamma_j \log p),
\end{align*}
where $\ell$ is defined as above and where we set $\prod_{j=h_0+1}^\ell (4\gamma_j \log p) = 1$ if $\ell \le h_0$.

Now we have
\begin{equation*}
(h-h_0) \gamma_h^t \le \sum_{j=h_0+1}^h \gamma_j^t \le \Gamma_{h_0,t}^t
\end{equation*}
and therefore $$\gamma_h \le \frac{\Gamma_{h_0,t}}{(h-h_0)^{1/t}}.$$
\end{enumerate}

Assume that $\ell > h_0$. Then, using Stirling's formula again, we obtain
\begin{eqnarray*}
\prod_{j=h_0+1}^\ell (4\gamma_j \log p) & \le & \frac{(4 \Gamma_{h_0,t} \log p)^{\ell-h_0}}{((\ell-h_0)!)^{1/t}} \\ & \le & \frac{1}{(2\pi (\ell - h_0)^{1/(2t)}} \left(\frac{ 4 \Gamma_{h_0,t} \mathrm{e}^{1/t} \log p}{(\ell-h_0)^{1/t} } \right)^{\ell-h_0}   \\ & \le & \left( \left(1 + \frac{ 4 \Gamma_{h_0,t} \mathrm{e}^{1/t} \log p}{(\ell-h_0)^{1/t} } \right)^{(\ell-h_0)/t} \right)^t  \\ & \le & \left( \mathrm{e}^{4 \Gamma_{h_0,t} \mathrm{e}^t \log p} \right)^t \\ & \le & p^{4 t \Gamma_{h_0, t} \mathrm{e}^t} \\ & \le & p^{\delta/2}.
\end{eqnarray*}
The result now follows by the same arguments as in the previous case.
\end{proof}

\vspace{0.5cm}
\noindent{\bf Author's Addresses:}\\

\noindent Josef Dick, School of Mathematics and Statistics, The University of New South Wales, Sydney, NSW 2052, Australia.  Email: josef.dick@unsw.edu.au \\

\noindent Friedrich Pillichshammer, Institut f\"{u}r Analysis, Universit\"{a}t Linz, Altenbergerstra{\ss}e 69, A-4040 Linz, Austria. Email: friedrich.pillichshammer@jku.at

\end{document}